\documentclass[10pt]{amsart}

\usepackage{amsmath,amssymb,mathtools}
\usepackage{microtype}
\usepackage[colorlinks=true,linkcolor=blue,citecolor=blue,urlcolor=blue]{hyperref}

\newtheorem{theorem}{Theorem}[section]
\newtheorem{corollary}[theorem]{Corollary}
\newtheorem{proposition}[theorem]{Proposition}
\newtheorem{lemma}[theorem]{Lemma}

\DeclareMathOperator{\covol}{covol}
\newcommand{\R}{\mathbb{R}}
\newcommand{\Z}{\mathbb{Z}}
\newcommand{\calS}{\mathcal{S}}
\newcommand{\wh}{\widehat}
\newcommand{\norm}[1]{\lVert #1\rVert}
\newcommand{\abs}[1]{\lvert #1\rvert}
\newcommand{\ip}[2]{\langle #1,#2\rangle}

\title[Saturation and No-Go Theorems for Scalar Poisson Certificates]{Saturation and
No-Go Theorems for Scalar Poisson Certificates of Gaussian Mass Maximality}

\author[Scott Duke Kominers]{Scott Duke Kominers}

\address{Harvard Business School; Department of Economics and Center of
Mathematical Sciences and Applications, Harvard University; and a16z crypto}

\email{kominers@fas.harvard.edu}

\thanks{I used LLMs to assist with computations, analysis, and synthesis in
the preparation of this article, particularly GPT-5.5 Pro and Claude 4.7 Opus
(accessed in part via Poe with the support of Quora, where I am an advisor).
I particularly appreciate helpful comments from Noam Elkies, as well as a
review from Refine.ink. The problem, methods, and eventual written form are my
own; and of course any errors remain my responsibility. This work was conducted
while I was visiting the Technological Innovation, Entrepreneurship, and
Strategic Management (TIES) Group at the MIT Sloan School of Management; I
greatly appreciate their hospitality.}

\subjclass[2020]{Primary 11H06; Secondary 11F27, 42B10, 52C17}
\keywords{Integral lattices, theta functions, Poisson summation, Cohn--Elkies
linear programming, Gaussian mass, $E_8$ lattice}

\begin{document}

\begin{abstract}
Regev and Stephens-Davidowitz conjectured that the Gaussian mass
$\Theta_\Lambda(t) = \sum_{x \in \Lambda} e^{-t\norm{x}^2}$ of any integral
lattice $\Lambda \subset \R^n$ is bounded above by $\Theta_{\Z^n}(t)$. For
$n\ge 4$, we prove a saturation theorem for the natural scalar
Poisson-summation certificates of this conjecture: any such certificate that is
sharp at $\Z^n$ must interpolate the Gaussian, and have vanishing Fourier
transform, at every nonzero point of integer squared norm. Applied to the
lattice $E_8 \oplus \Z^{n-8}$, this rigidity is incompatible with the strict
theta-series gap
$\Theta_{\Z^8}(t) - \Theta_{E_8}(t) = \theta_2(it/\pi)^4\,\theta_4(it/\pi)^4 > 0$.
Consequently, in dimensions $n \ge 8$, no scalar Poisson certificate can attain
the sharp $\Z^n$ Gaussian mass bound. The same argument rules out the
corresponding scalar certificate strategy for the stable-lattice formulation of
the conjecture, and extends to orbit-constant graded families
$\Lambda \mapsto h_\Lambda$; near-sharp sequences are similarly excluded under
a uniform summability hypothesis.
\end{abstract}

\maketitle

\section{Introduction}

Let $\Lambda\subset\R^n$ be a full-rank lattice. We call $\Lambda$
\emph{integral} if $\ip{x}{y}\in\Z$ for all $x,y\in\Lambda$, and
\emph{unimodular} if $\covol(\Lambda)=1$. The \emph{Gaussian mass}
\[
  \Theta_\Lambda(t)=\sum_{x\in\Lambda} e^{-t\norm{x}^2}\qquad (t>0)
\]
is a smoothed lattice-point count, recording the distribution of vectors of
$\Lambda$ across Euclidean shells. Regev and Stephens-Davidowitz~\cite{RegevSD}
proved a nearly-sharp reverse Minkowski theorem for integral lattices and
raised the natural conjecture that
\begin{equation}\label{eq:RSD}
  \Theta_\Lambda(t)\le \Theta_{\Z^n}(t)\qquad(t>0)
\end{equation}
for every integral lattice $\Lambda\subset\R^n$ \cite{RegevSD,RegevSDoriginal}.
The Gaussian formulation is important: the analogous shell-counting statement is
false. Already in dimension~$8$, counting the origin, $E_8$ has $241$ vectors
of squared norm at most $2$, whereas $\Z^8$ has $129$. Thus any proof of
\eqref{eq:RSD} must use cancellations among different shells, not
shell-by-shell domination. The discrete shell-counting analogue of
\eqref{eq:RSD}, proven in the same paper~\cite{RegevSD}, was recently shown by
Kominers~\cite{Kominers} to be saturated for $n \ge 2$ only at $(\Z^n, k=1)$
and $(E_8, k=2)$; these two lattices also play a distinguished role in the
present note.

One direct Fourier-analytic approach to \eqref{eq:RSD} is modeled on the
Cohn--Elkies linear programming bound for sphere packing \cite{CohnElkies} and
the magic-function/interpolation method behind universal optimality phenomena
for spheres and Euclidean lattices \cite{CohnKumar,CKMRV,Viazovska}. With the
Fourier transform normalized by
\begin{equation}\label{eq:fourier}
  \wh f(\xi)=\int_{\R^n} f(x)e^{-2\pi i\ip{x}{\xi}}\,dx,
\end{equation}
Poisson summation says
\begin{equation}\label{eq:PS-intro}
  \sum_{x\in\Lambda} f(x)=\frac{1}{\covol(\Lambda)}
  \sum_{\xi\in\Lambda^*}\wh f(\xi),
\end{equation}
where $\Lambda^*=\{\xi\in\R^n: \ip{x}{\xi}\in\Z\text{ for all }x\in\Lambda\}$
is the \emph{dual lattice}. If $\Lambda$ is integral, then
$\Lambda \subseteq \Lambda^*$. If moreover $\covol(\Lambda)=1$, then
$\covol(\Lambda^*)=1$ as well, and hence $\Lambda=\Lambda^*$ as subsets of
$\R^n$, i.e., $\Lambda$ is \emph{self-dual}. Thus a single
lattice-independent Schwartz function $h$ satisfying
\[
  h(x)\ge e^{-t\norm{x}^2},
  \qquad
  \wh h(x)\le 0
\]
at every nonzero $x$ in every unimodular integral lattice
$\Lambda \subset \R^n$ would give, by Poisson summation,
\[
  \Theta_\Lambda(t)
   \le 1+\wh h(0)-h(0).
\]
We call such an $h$ a \emph{scalar Poisson certificate}, and we say $h$ is
\emph{sharp (at $\Z^n$)} if the resulting bound is tight at $\Z^n$, i.e., if
\begin{equation}\label{eq:sharp-normalization}
  1+\wh h(0)-h(0)=\Theta_{\Z^n}(t).
\end{equation}

The main result of this note is a rigidity theorem showing that sharpness
forces strong pointwise equality on every integral shell.

\begin{theorem}[Integral-shell saturation]\label{thm:saturation}
Let $n\ge 4$ and $t>0$, and let $h:\R^n\to\R$ be an even Schwartz function
satisfying:
\begin{enumerate}
\item[\textup{(i)}] $h(x)\ge e^{-t\norm{x}^2}$ for every nonzero $x$ in every
unimodular integral lattice $\Lambda\subset\R^n$;
\item[\textup{(ii)}] $\wh h(x)\le 0$ for every nonzero $x$ in every unimodular
integral lattice $\Lambda\subset\R^n$.
\end{enumerate}
If $h$ is sharp at $\Z^n$ in the sense of~\eqref{eq:sharp-normalization}, then
\begin{equation}\label{eq:saturation-conclusion}
  h(x)=e^{-t\norm{x}^2}
  \quad\text{and}\quad
  \wh h(x)=0
  \qquad(x\ne 0,\ \norm{x}^2\in\Z).
\end{equation}
\end{theorem}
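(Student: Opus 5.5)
The plan is to apply Poisson summation~\eqref{eq:PS-intro} not just to $\Z^n$ but to every rotated copy $Q\Z^n$ with $Q\in O(n)$. Each such copy is again a unimodular integral lattice, so it is covered by hypotheses (i) and (ii). Since $h$ need not be radial, we cannot work with $\Z^n$ alone, and using all the rotated copies is what makes the argument go through.

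Fix $Q\in O(n)$, put $L=Q\Z^n$ and $g(x)=e^{-t\norm{x}^2}$. Because $h$ is Schwartz, both $\sum_{x\in L}h(x)$ and $\sum_{\xi\in L}\wh h(\xi)$ converge absolutely. Since $L=L^*$ and $\covol(L)=1$, Poisson summation gives
\[
  h(0)+\sum_{x\in L\setminus\{0\}} h(x)=\wh h(0)+\sum_{\xi\in L\setminus\{0\}}\wh h(\xi).
\]
The Gaussian is radial, so $\sum_{x\in L\setminus\{0\}} g(x)=\Theta_{\Z^n}(t)-1$. Sharpness~\eqref{eq:sharp-normalization} says exactly that $\wh h(0)-h(0)=\Theta_{\Z^n}(t)-1$. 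Subtracting this identity from the displayed one yields
\[
  \sum_{x\in L\setminus\{0\}}\bigl(h(x)-g(x)\bigr)+\sum_{\xi\in L\setminus\{0\}}\bigl(-\wh h(\xi)\bigr)=0 .
\]
By (i) and (ii), every term in both sums is nonnegative, so every term vanishes. Hence $h=g$ and $\wh h=0$ at every nonzero point of $Q\Z^n$.

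It remains to show that every $x\neq0$ with $\norm{x}^2=m\in\Z$ lies in some $Q\Z^n$. Since $x\neq 0$, we have $m\ge1$. By Lagrange's four-square theorem and $n\ge4$, there is $y\in\Z^n$ with $\norm{y}^2=m$. Because $\norm{x}=\norm{y}$, there is $Q\in O(n)$ with $Qy=x$, and then $x\in Q\Z^n$. Applying the previous paragraph to this $Q$ gives~\eqref{eq:saturation-conclusion} at $x$.

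The only genuinely substantive inputs are the four-square theorem, which is where $n\ge4$ enters, and the observation that the hypotheses quantify over all unimodular integral lattices, in particular over all rotations of $\Z^n$. The remaining steps are bookkeeping with absolutely convergent sums. I do not expect a real obstacle; the one point to state carefully is that the sharpness normalization is independent of $Q$, which holds because $\Theta_{Q\Z^n}=\Theta_{\Z^n}$.
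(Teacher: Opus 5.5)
Your proposal is correct and matches the paper's proof: Poisson summation on each rotation $U\Z^n$, combined with sharpness, forces two absolutely convergent sums of nonnegative terms to vanish termwise. Lagrange's four-square theorem and the transitivity of $O(n)$ on spheres then carry this to every nonzero $x$ with $\norm{x}^2\in\Z$, and your single identity with zero right-hand side is just a compact restatement of the paper's chain of equalities.
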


For $n\ge 4$, the union of the nonzero point sets of the rotations $U\Z^n$ is
precisely the set of nonzero $x\in\R^n$ with $\norm{x}^2\in\Z$. Thus, in the
unimodular integral setting, the pointwise sign hypotheses amount to
constraints on the integer-squared-radius shells. The dimension restriction
$n \ge 4$ enters only through Lagrange's four-square theorem, which is what
carries saturation from $\Z^n \setminus \{0\}$ to every sphere of positive
integer squared radius.

In dimensions $n \ge 8$, the rigidity conclusion is incompatible with the
lattice $E_8 \oplus \Z^{n-8}$, which is unimodular and integral, has integral
shells, and has theta function strictly smaller than $\Theta_{\Z^n}$. This
yields the main ``no-go'' consequence ruling out \eqref{eq:sharp-normalization}.

\begin{corollary}[Strictness of scalar Poisson certificates]\label{cor:strict}
Let $n\ge 8$ and $t>0$, and suppose $h:\R^n\to\R$ is an even Schwartz function
satisfying conditions \textup{(i)} and \textup{(ii)} of
Theorem~\ref{thm:saturation}. Then
\begin{equation}\label{eq:strict}
  1+\wh h(0)-h(0)>\Theta_{\Z^n}(t).
\end{equation}
\end{corollary}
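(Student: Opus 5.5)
The plan is to argue by contradiction: assume $h$ satisfies (i) and (ii) and that the bound is sharp, i.e.\ \eqref{eq:sharp-normalization} holds. Theorem~\ref{thm:saturation} applies since $n\ge 8\ge 4$, so $h$ then satisfies the saturation conclusion \eqref{eq:saturation-conclusion}. It remains to show that these identities are incompatible with a lattice of strictly smaller theta function.

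First we rule out the possibility $1+\wh h(0)-h(0)<\Theta_{\Z^n}(t)$. Apply Poisson summation \eqref{eq:PS-intro} to $\Z^n$, which is self-dual with covolume $1$:
\[
h(0)+\sum_{x\in\Z^n\setminus\{0\}}h(x)=\wh h(0)+\sum_{\xi\in\Z^n\setminus\{0\}}\wh h(\xi).
\]
By (i) we have $\sum_{x\ne0}h(x)\ge \Theta_{\Z^n}(t)-1$, and by (ii) the dual sum over $\xi\neq 0$ is $\le 0$. Together these give $1+\wh h(0)-h(0)\ge \Theta_{\Z^n}(t)$. So it suffices to exclude equality.

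Next, suppose equality holds, and take $\Lambda=E_8\oplus\Z^{n-8}$. This lattice is unimodular and integral (a direct sum of such). Every vector in it has integer squared norm, so \eqref{eq:saturation-conclusion} gives
\[
h(x)=e^{-t\norm{x}^2}\quad\text{and}\quad \wh h(x)=0\qquad\text{for all nonzero }x\in\Lambda.
\]
Poisson summation on the self-dual $\Lambda$ then yields
\[
h(0)+\Theta_\Lambda(t)-1=\wh h(0),
\qquad\text{i.e.}\qquad
\Theta_\Lambda(t)=1+\wh h(0)-h(0)=\Theta_{\Z^n}(t).
\]

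Finally, the theta function factors over the direct sum: $\Theta_\Lambda=\Theta_{E_8}\Theta_{\Z}^{\,n-8}$ and $\Theta_{\Z^n}=\Theta_{\Z^8}\Theta_{\Z}^{\,n-8}$. Since $\Theta_{\Z}(t)>0$, the equality above forces $\Theta_{E_8}(t)=\Theta_{\Z^8}(t)$. This contradicts the strict gap $\Theta_{\Z^8}(t)-\Theta_{E_8}(t)=\theta_2(it/\pi)^4\theta_4(it/\pi)^4>0$; positivity holds because $\theta_2$ and $\theta_4$ are nonzero at purely imaginary arguments, being positive products or sums there.

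The main obstacle is justifying this theta identity, or at least the strict inequality. I would use $\Theta_{\Z^8}=\theta_3^8$ and $\Theta_{E_8}=\tfrac12(\theta_2^8+\theta_3^8+\theta_4^8)$. The difference is then
\[
\theta_3^8-\Theta_{E_8}=\tfrac12\bigl(\theta_3^8-\theta_2^8-\theta_4^8\bigr).
\]
By Jacobi's identity $\theta_3^4=\theta_2^4+\theta_4^4$ this equals $\tfrac12\bigl((\theta_2^4+\theta_4^4)^2-\theta_2^8-\theta_4^8\bigr)=\theta_2^4\theta_4^4$. The remaining care is only in matching the normalization $q=e^{-t}$ with $\tau=it/\pi$.
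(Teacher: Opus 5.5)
Your proposal is correct and follows essentially the same route as the paper. You derive the weak bound from Poisson summation on $\Z^n$, invoke Theorem~\ref{thm:saturation} in the equality case, and collapse Poisson summation on $E_8\oplus\Z^{n-8}$ to $\Theta_{E_8\oplus\Z^{n-8}}(t)=\Theta_{\Z^n}(t)$; dividing by $\Theta_{\Z}(t)^{n-8}>0$ then contradicts the strict gap of Lemma~\ref{lem:e8-gap}.
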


\begin{corollary}[No sharp scalar Poisson certificate]\label{cor:nogo}
For $n\ge 8$ and $t>0$, there is no even Schwartz function $h:\R^n\to\R$
satisfying conditions \textup{(i)} and \textup{(ii)} of
Theorem~\ref{thm:saturation} together with the sharp
normalization~\eqref{eq:sharp-normalization}.
\end{corollary}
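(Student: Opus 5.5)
The plan is to derive this directly from Theorem~\ref{thm:saturation}, applied to a well-chosen competitor lattice. It is also immediate from Corollary~\ref{cor:strict}, since \eqref{eq:strict} is incompatible with \eqref{eq:sharp-normalization}. To keep the argument self-contained, I would run the contradiction explicitly.

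Suppose $h$ is an even Schwartz function satisfying (i), (ii) and \eqref{eq:sharp-normalization}, with $n\ge 8\ge 4$. Theorem~\ref{thm:saturation} then gives the two equalities of \eqref{eq:saturation-conclusion}: $h(x)=e^{-t\norm{x}^2}$ and $\wh h(x)=0$ at every nonzero $x$ with $\norm{x}^2\in\Z$.

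Now take $\Lambda=E_8\oplus\Z^{n-8}$. It is integral and unimodular, hence self-dual, and every vector has integer squared norm. Poisson summation \eqref{eq:PS-intro} applies since $h$ is Schwartz, and it reads $\sum_{x\in\Lambda}h(x)=\sum_{\xi\in\Lambda}\wh h(\xi)$. Inserting the saturation identities on the nonzero terms gives
\[
h(0)+\bigl(\Theta_\Lambda(t)-1\bigr)=\wh h(0).
\]
Hence $\Theta_\Lambda(t)=1+\wh h(0)-h(0)=\Theta_{\Z^n}(t)$ by \eqref{eq:sharp-normalization}.

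It remains to show $\Theta_\Lambda(t)<\Theta_{\Z^n}(t)$. Theta functions multiply over orthogonal direct sums, so
\[
\Theta_{\Z^n}(t)-\Theta_\Lambda(t)=\Theta_{\Z^{n-8}}(t)\bigl(\Theta_{\Z^8}(t)-\Theta_{E_8}(t)\bigr).
\]
The first factor is at least $1>0$, and the second factor is the gap stated in the abstract.

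The only step needing real verification is that gap identity and its positivity. Set $\tau=it/\pi$, so that $\theta_3(\tau)=\sum_m e^{-tm^2}$ and $\Theta_{\Z^8}(t)=\theta_3^8$. I would use the classical formula $\Theta_{E_8}=\tfrac12(\theta_2^8+\theta_3^8+\theta_4^8)$ together with Jacobi's identity $\theta_3^4=\theta_2^4+\theta_4^4$. Then
\[
\theta_3^8-\tfrac12(\theta_2^8+\theta_3^8+\theta_4^8)=\tfrac12\bigl((\theta_2^4+\theta_4^4)^2-\theta_2^8-\theta_4^8\bigr)=\theta_2^4\theta_4^4.
\]
For purely imaginary $\tau$, $\theta_2$ is a sum of positive terms. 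The product formula $\theta_4=\prod_{m\ge1}(1-q^{2m})(1-q^{2m-1})^2$ with $q=e^{-t}\in(0,1)$ shows $\theta_4>0$. So the gap is strictly positive, giving $\Theta_\Lambda(t)<\Theta_{\Z^n}(t)$, which contradicts the equality derived above.
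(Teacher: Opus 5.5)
Your proposal is correct and follows essentially the paper's route. The paper derives Corollary~\ref{cor:nogo} immediately from Corollary~\ref{cor:strict}, and that corollary's proof is exactly your argument: saturation via Theorem~\ref{thm:saturation}, then Poisson summation on the self-dual lattice $E_8\oplus\Z^{n-8}$ to force $\Theta_{E_8\oplus\Z^{n-8}}(t)=\Theta_{\Z^n}(t)$, contradicting the strictly positive gap $\theta_2^4\theta_4^4$ of Lemma~\ref{lem:e8-gap}.
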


The focus on $n\ge 8$ in Corollaries~\ref{cor:strict} and~\ref{cor:nogo}
arises because $n=8$ is the first nontrivial dimension for the unimodular
integral comparison---for $n<8$, every positive definite unimodular integral
lattice of rank $n$ is isometric to $\Z^n$. Meanwhile, the evenness assumption
is without loss of generality: for any real Schwartz $h$ satisfying conditions
\textup{(i)} and \textup{(ii)} at every lattice point, the even part
$\tfrac{1}{2}(h(x)+h(-x))$ also satisfies them, since every lattice is
centrally symmetric. Evenness simply ensures that $\wh h$ is real, so that
condition~\textup{(ii)} is unambiguous.

The obstruction is caused by equality, not by limited flexibility in choosing
$h$. Sharpness at every rotation of $\Z^n$ would force a would-be certificate
to agree with the Gaussian on every shell of integer squared radius; the
resulting interpolation conditions are incompatible with the theta series of
$E_8$.

The same saturation mechanism extends the obstruction substantially beyond the
integral unimodular setting. In Section~\ref{sec:extensions} we prove three
strengthenings:
\begin{itemize}
\item The obstruction holds for the broader class of \emph{stable lattices} in
which Regev and Stephens-Davidowitz~\cite{RegevSDoriginal} originally
formulated~\eqref{eq:RSD} (Corollary~\ref{cor:stable-strict}).
\item Moreover, the obstruction holds for any \emph{orbit-constant graded
family} $\Lambda \mapsto h_\Lambda$ in which each lattice is allowed its own
certificate, provided the assignment $\Lambda \mapsto h_\Lambda$ is constant on
$O(n)$-orbits (Theorem~\ref{thm:graded-strict}). This rules out, in particular,
every certificate framework in which $h_\Lambda$ is determined by
orientation-independent lattice invariants such as the theta function, the
shell-count sequence, modular-form coordinates for even unimodular $\Lambda$,
or any finite list of $O(n)$-invariants.
\item And the obstruction rules out \emph{near-sharp sequences} of
certificates, under a uniform-summability hypothesis on the restrictions to
$E_8 \oplus \Z^{n-8}$ (Corollary~\ref{cor:no-compact-limits}). Limiting
approaches of this scalar form must therefore be genuinely noncompact.
\end{itemize}

\subsection*{Related literature}
To the best of our knowledge, the particular saturation obstruction we identify
has not been isolated previously. The closest conceptual neighbors are results
about forced zeros or limitations in the Cohn--Elkies linear program.
Zubrilina~\cite{Zubrilina} studied the possible zero sets of optimal
Cohn--Elkies functions, while Cohn--Triantafillou \cite{CohnTriantafillou},
Li~\cite{Li}, and de Courcy-Ireland--Dostert--Viazovska~\cite{DCIDV} proved
nonsharpness results for that linear program in several dimensions different
from $1,2,8,24$. The mechanism here is different: the obstruction is not a
numerical failure of a linear program, but an exact saturation phenomenon caused
by the coexistence of $\Z^8$ and $E_8$. Our result is also targeted: we rule
out the direct Cohn--Elkies/CKMRV-style scalar Poisson certificate even before
one confronts nonunimodular integral lattices---but we do not rule out
modular-form or theta-series parametrizations as in the Belfiore--Sol\'e line
and in recent work of Bollauf--Lin at fixed argument
\cite{BelfioreSole,OggierSoleBelfiore,ErnvallHytonen,BollaufLin}, or
higher-order methods such as semidefinite programming relaxations that exploit
the integrality of pairwise inner products.

\section{Preliminaries}

We use the Fourier transform convention displayed in~\eqref{eq:fourier}. For
$f\in\calS(\R^n)$, Poisson summation in the form \eqref{eq:PS-intro} is
absolutely convergent. In particular, if $\Lambda$ is unimodular and integral,
then $\Lambda\subseteq\Lambda^*$ and both lattices have covolume $1$, hence
$\Lambda=\Lambda^*$ and
\begin{equation}\label{eq:PS-unimodular}
  \sum_{x\in\Lambda} f(x)=\sum_{x\in\Lambda}\wh f(x).
\end{equation}
Orthogonal direct sums have multiplicative theta series:
\begin{equation}\label{eq:multiplicative-theta}
  \Theta_{\Lambda_1\oplus\Lambda_2}(t)
  =\sum_{x\in\Lambda_1}\sum_{y\in\Lambda_2}e^{-t(\norm{x}^2+\norm{y}^2)}
  =\Theta_{\Lambda_1}(t)\Theta_{\Lambda_2}(t).
\end{equation}

Let $\tau=it/\pi$ and put $q=e^{\pi i\tau}=e^{-t}$. We use the Jacobi theta
nullwerte
\[
\theta_2(\tau)=\sum_{m\in\Z}q^{(m+1/2)^2},\qquad
\theta_3(\tau)=\sum_{m\in\Z}q^{m^2},\qquad
\theta_4(\tau)=\sum_{m\in\Z}(-1)^m q^{m^2}.
\]
Some references work with $Q=e^{2\pi i\tau}=q^2$ rather than $q$, particularly
for the Eisenstein series $E_4(\tau)=1+240\sum_{m\ge 1}\sigma_3(m)Q^m$. Both
conventions are used in the sequel; the relations $Q=q^2$ and $\tau=it/\pi$
allow either to be specialized to the real parameter $t>0$. With this
convention, $\Theta_{\Z^8}(t)=\theta_3(\tau)^8$. The $E_8$ root lattice is the
even unimodular rank-$8$ lattice
\[
  E_8=D_8\cup\bigl(D_8+(1/2,\ldots,1/2)\bigr),
  \qquad D_8=\{x\in\Z^8: x_1+\cdots+x_8\equiv 0\!\!\!\!\pmod 2\},
\]
up to isometry. Since the theta series of an even unimodular lattice of rank
$8$ is a weight-$4$ modular form for $\mathrm{SL}_2(\Z)$ with constant term
$1$, and since the space of such forms is spanned by the Eisenstein series
$E_4$, we have
\begin{equation}\label{eq:E8E4}
  \Theta_{E_8}(t)=E_4(it/\pi).
\end{equation}
Glaisher's identity and Jacobi's abstruse identity \cite[Ch.\ 4]{ConwaySloane}
give
\begin{equation}\label{eq:glaisher-jacobi}
  E_4(\tau)=\frac{1}{2}\bigl(\theta_2(\tau)^8+
  \theta_3(\tau)^8+\theta_4(\tau)^8\bigr),
  \qquad
  \theta_3(\tau)^4=\theta_2(\tau)^4+\theta_4(\tau)^4.
\end{equation}
Combining \eqref{eq:E8E4} and \eqref{eq:glaisher-jacobi}, and writing
$A=\theta_2(\tau)^4$ and $B=\theta_4(\tau)^4$, we obtain
\begin{align}\label{eq:Z8minusE8}
  \Theta_{\Z^8}(t)-\Theta_{E_8}(t)
  &=\theta_3^8-\frac12(\theta_2^8+\theta_3^8+\theta_4^8)\nonumber\\
  &=\frac12\bigl((A+B)^2-A^2-B^2\bigr)
   =AB
   =\theta_2(\tau)^4\theta_4(\tau)^4.
\end{align}
For $t>0$, the inequalities $\theta_2(\tau)>0$ and $\theta_3(\tau)>0$ are
immediate from the series for $\theta_2$ and $\theta_3$, while
$\theta_4(\tau)>0$ follows, for example, from the product formula
\[
  \theta_4(\tau)=\prod_{m\ge 1}(1-q^{2m})(1-q^{2m-1})^2
  \qquad (0<q<1).
\]
We record the resulting comparison as a lemma.

\begin{lemma}\label{lem:e8-gap}
For every $t>0$,
\begin{equation}\label{eq:positive-gap}
  \Theta_{\Z^8}(t)-\Theta_{E_8}(t)=\theta_2(it/\pi)^4\theta_4(it/\pi)^4>0,
\end{equation}
and, consequently, for every $n\ge 8$,
\[
  \Theta_{E_8\oplus\Z^{n-8}}(t)<\Theta_{\Z^n}(t).
\]
\end{lemma}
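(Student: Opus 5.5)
The plan is to assemble the lemma from the identities already recorded in the Preliminaries, and then to pass to higher rank using the multiplicativity of theta series. For the first assertion, I would put $\tau=it/\pi$ and $q=e^{-t}\in(0,1)$. I would then invoke \eqref{eq:E8E4} to replace $\Theta_{E_8}(t)$ by $E_4(\tau)$, and use $\Theta_{\Z^8}(t)=\theta_3(\tau)^8$. Glaisher's identity together with Jacobi's abstruse identity \eqref{eq:glaisher-jacobi} then gives the algebraic computation \eqref{eq:Z8minusE8}. Writing $A=\theta_2^4$ and $B=\theta_4^4$, we have $\theta_3^8=(A+B)^2$, so the difference is $\tfrac12\bigl((A+B)^2-A^2-B^2\bigr)=AB$. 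This yields the identity in \eqref{eq:positive-gap}.

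Positivity reduces to showing $\theta_2(\tau)>0$ and $\theta_4(\tau)>0$ for real $0<q<1$. For $\theta_2$, every term $q^{(m+1/2)^2}$ is positive, and the series converges absolutely. For $\theta_4$, I would use the Jacobi triple product displayed just before the lemma: each factor $(1-q^{2m})$ and $(1-q^{2m-1})^2$ lies in $(0,1]$. Since $\sum_m q^m<\infty$, the infinite product converges to a nonzero, hence positive, limit. It follows that $AB>0$.

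For the second assertion, I would apply \eqref{eq:multiplicative-theta} twice. This gives $\Theta_{E_8\oplus\Z^{n-8}}(t)=\Theta_{E_8}(t)\Theta_{\Z}(t)^{n-8}$ and $\Theta_{\Z^n}(t)=\Theta_{\Z^8}(t)\Theta_{\Z}(t)^{n-8}$. Since $\Theta_\Z(t)=\theta_3(\tau)>0$, multiplying the strict inequality $\Theta_{E_8}(t)<\Theta_{\Z^8}(t)$ by the positive factor $\theta_3(\tau)^{n-8}$ preserves strictness. The case $n=8$ is the first assertion itself.

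I do not expect a genuine obstacle. The only points needing care are the modular identity $\Theta_{E_8}=E_4$ and the classical identities \eqref{eq:glaisher-jacobi}, both of which the paper takes as given. Beyond those, one must justify that the infinite product for $\theta_4$ is strictly positive rather than merely nonnegative, which follows from the convergence of $\sum_m q^m$. As an independent sanity check, I could compare the first $q$-coefficients: $16\cdot16\,q^2=256q^2$ for $AB$, against $\Theta_{\Z^8}-\Theta_{E_8}=(1+16q+112q^2+\cdots)-(1+240q^2+\cdots)$. The $q^2$ terms do not match, because $AB$ starts at order $q\cdot 1$ after accounting for $\theta_2^4=16q+\cdots$ and $\theta_4^4=1-8q+\cdots$. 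This gives $16q-128q^2+\cdots$, and $112-240=-128$ agrees.
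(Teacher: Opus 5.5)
Your proposal is correct and follows the paper's argument essentially verbatim: $\Theta_{E_8}=E_4$ together with Glaisher's identity and Jacobi's abstruse identity reduces the gap to $AB=\theta_2^4\theta_4^4$, positivity comes from the $\theta_2$ series and the product formula for $\theta_4$, and the rank-$n$ inequality follows from multiplicativity and the positive factor $\theta_3(\tau)^{n-8}=\Theta_{\Z^{n-8}}(t)$. Your closing sanity check is not needed for the proof, and it begins with a spurious $256q^2$ claim before correcting itself; the corrected expansion $16q-128q^2+\cdots$ does agree with $\Theta_{\Z^8}-\Theta_{E_8}$.
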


\section{The Saturation Theorem}\label{sec:saturation}

We now prove Theorem~\ref{thm:saturation}. The proof uses two elementary
observations. First, if $U\in O(n)$, then $U\Z^n$ is again a unimodular
integral lattice, since covolume and inner products are preserved by $U$;
moreover,
\begin{equation}\label{eq:sametheta}
  \Theta_{U\Z^n}(t)=\Theta_{\Z^n}(t).
\end{equation}
Second, Lagrange's four-square theorem states that every nonnegative integer is
a sum of four integer squares; since $n\ge 4$, embedding $\Z^4\hookrightarrow
\Z^n$ in any four coordinates shows that for every $m\in\Z_{>0}$ there is a
vector $z\in\Z^n$ with $\norm{z}^2=m$.

\begin{proof}[Proof of Theorem~\ref{thm:saturation}]
Suppose that $h\in\calS(\R^n)$ satisfies conditions \textup{(i)} and
\textup{(ii)} and is sharp at $\Z^n$. By Poisson summation applied to the
unimodular lattice $U\Z^n$ for an arbitrary $U\in O(n)$, together with
\eqref{eq:sametheta}, condition \textup{(i)} on $U\Z^n$, and condition
\textup{(ii)} on $U\Z^n$, we obtain the chain
\begin{align}\label{eq:saturation-chain}
  \Theta_{\Z^n}(t)-1
  &=\sum_{z\in U\Z^n\setminus\{0\}} e^{-t\norm{z}^2} \nonumber\\
  &\le \sum_{z\in U\Z^n\setminus\{0\}} h(z) \nonumber\\
  &=\sum_{z\in U\Z^n}h(z)-h(0) \nonumber\\
  &=\sum_{z\in U\Z^n}\wh h(z)-h(0) \nonumber\\
  &=\wh h(0)-h(0)+\sum_{z\in U\Z^n\setminus\{0\}}\wh h(z) \nonumber\\
  &\le \wh h(0)-h(0)\nonumber\\
  &=\Theta_{\Z^n}(t)-1,
\end{align}
where the final equality is the sharpness assumption~\eqref{eq:sharp-normalization}.
Equality therefore holds throughout, for every $U\in O(n)$. The first such
equality is a sum of nonnegative terms
\[
  h(z)-e^{-t\norm{z}^2}\ge 0\qquad(z\in U\Z^n\setminus\{0\}),
\]
and the second is a sum of nonpositive terms
\[
  \wh h(z)\le 0\qquad(z\in U\Z^n\setminus\{0\}).
\]
Both sums are absolutely convergent because $h$ and $\wh h$ are Schwartz.
Therefore every term vanishes:
\begin{equation}\label{eq:saturation-on-rotation}
  h(z)=e^{-t\norm{z}^2},\qquad \wh h(z)=0
  \qquad(z\in U\Z^n\setminus\{0\}).
\end{equation}

Now let $x\in\R^n$ be nonzero with $\norm{x}^2\in\Z$. By Lagrange's
four-square theorem, we may choose $z\in\Z^n$ with $\norm{z}=\norm{x}$. There
exists $U\in O(n)$ with $Uz=x$, and so \eqref{eq:saturation-on-rotation} yields
\[
  h(x)=e^{-t\norm{x}^2},\qquad \wh h(x)=0
  \qquad(x\ne 0,\ \norm{x}^2\in\Z),
\]
as claimed in~\eqref{eq:saturation-conclusion}.
\end{proof}

\section{The $E_8$ Obstruction}\label{sec:e8-obstruction}

We now derive Corollary~\ref{cor:strict} from Theorem~\ref{thm:saturation} by
applying the saturation conclusion to the lattice $E_8 \oplus \Z^{n-8}$.

\begin{proof}[Proof of Corollary~\ref{cor:strict}]
Let $h$ satisfy conditions \textup{(i)} and \textup{(ii)} of
Theorem~\ref{thm:saturation}. The inequality
\begin{equation}\label{eq:weak-bound}
  1+\wh h(0)-h(0)\ge \Theta_{\Z^n}(t)
\end{equation}
follows from the upper part of the chain~\eqref{eq:saturation-chain}, applied
with $U=I$. Assume for the sake of seeking a contradiction that equality holds
in~\eqref{eq:weak-bound}. Then $h$ is sharp at $\Z^n$, and by
Theorem~\ref{thm:saturation},
\begin{equation}\label{eq:integer-shell-saturation}
  h(x)=e^{-t\norm{x}^2},\qquad \wh h(x)=0
  \qquad(x\ne 0,\ \norm{x}^2\in\Z).
\end{equation}

We set
\[
  \Lambda=E_8\oplus\Z^{n-8},
\]
with the convention $\Z^0=\{0\}$ when $n=8$. The lattice $E_8$ is even
unimodular, and $\Z^{n-8}$ is unimodular and integral; hence their orthogonal
direct sum is unimodular and integral. Moreover every nonzero $x\in\Lambda$ has
$\norm{x}^2\in\Z_{>0}$. By \eqref{eq:integer-shell-saturation},
\begin{equation}\label{eq:saturation-on-E8sum}
  h(x)=e^{-t\norm{x}^2},\qquad \wh h(x)=0
  \qquad(x\in\Lambda\setminus\{0\}).
\end{equation}
Applying Poisson summation to $\Lambda=\Lambda^*$ and using
\eqref{eq:saturation-on-E8sum}, we find
\begin{align}\label{eq:E8sum-collapse}
  \Theta_\Lambda(t)-1
   &=\sum_{x\in\Lambda\setminus\{0\}} e^{-t\norm{x}^2} \nonumber\\
   &=\sum_{x\in\Lambda\setminus\{0\}}h(x) \nonumber\\
   &=\sum_{x\in\Lambda}h(x)-h(0) \nonumber\\
   &=\sum_{x\in\Lambda}\wh h(x)-h(0) \nonumber\\
   &=\wh h(0)-h(0)
    =\Theta_{\Z^n}(t)-1,
\end{align}
where the last equality is the assumed equality case of \eqref{eq:weak-bound}.
Therefore we have
\begin{equation}\label{eq:theta-E8sum-equals-Zn}
  \Theta_{E_8\oplus\Z^{n-8}}(t)=\Theta_{\Z^n}(t);
\end{equation}
by multiplicativity \eqref{eq:multiplicative-theta}, this says
\[
  \Theta_{E_8}(t)\Theta_{\Z^{n-8}}(t)
  =\Theta_{\Z^8}(t)\Theta_{\Z^{n-8}}(t).
\]
Since $\Theta_{\Z^{n-8}}(t)>0$, we can divide to obtain
$\Theta_{E_8}(t)=\Theta_{\Z^8}(t)$, contradicting Lemma~\ref{lem:e8-gap}.
\end{proof}

Corollary~\ref{cor:nogo} is immediate, as any candidate sharp certificate would
violate~\eqref{eq:strict}.

\section{Extensions}\label{sec:extensions}

The saturation mechanism of Theorem~\ref{thm:saturation} is more flexible than
its single-function statement suggests. In this section we record three
extensions: first from unimodular integral lattices to general stable lattices;
then from a single lattice-independent certificate to any orbit-constant graded
family; and finally from sharp certificates to near-sharp sequences.

\subsection{Stable lattices}\label{sec:stable}

The Regev--Stephens-Davidowitz conjecture~\cite{RegevSDoriginal} was originally
formulated in the broader setting of \emph{stable lattices}, i.e., full-rank
lattices $\Lambda\subset\R^n$ such that $\covol(\Lambda)=1$ and every nonzero
sublattice $\Lambda'\subseteq\Lambda$ has covolume at least $1$ in its linear
span---or equivalently, if $v_1,\ldots,v_r$ is a basis of $\Lambda'$, then
\[
  \sqrt{\det(\ip{v_i}{v_j})_{i,j=1}^r}\ge 1.
\]
Every unimodular integral lattice is stable: the Gram matrix of any nonzero
sublattice of an integral lattice has integer entries and positive determinant,
hence determinant at least $1$, so the covolume of the sublattice in its span
is at least $1$. In particular, $\Z^n$, every rotation $U\Z^n$, and
$E_8\oplus\Z^{n-8}$ are stable, since they are unimodular integral.

For a stable lattice that is not self-dual, the natural scalar Poisson
certificate places the majorization condition on $\Lambda$ and the Fourier
nonpositivity condition on~$\Lambda^*$.

\begin{corollary}[Strictness over stable lattices]\label{cor:stable-strict}
The conclusions of Theorem~\ref{thm:saturation}, Corollary~\ref{cor:strict},
and Corollary~\ref{cor:nogo} continue to hold if conditions \textup{(i)} and
\textup{(ii)} are strengthened to require
\[
  h(x)\ge e^{-t\norm{x}^2}\qquad(x\in\Lambda\setminus\{0\})
\]
and
\[
  \wh h(\xi)\le 0\qquad(\xi\in\Lambda^*\setminus\{0\})
\]
for every stable lattice $\Lambda\subset\R^n$.
\end{corollary}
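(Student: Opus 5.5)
The plan is to reduce Corollary~\ref{cor:stable-strict} to the arguments already given, by observing that the strengthened hypotheses imply the original conditions~\textup{(i)} and~\textup{(ii)}. Every unimodular integral lattice $\Lambda$ is stable, as noted above, and is self-dual, so $\Lambda^*=\Lambda$. Hence the stable-lattice majorization condition, restricted to such $\Lambda$, is exactly condition~\textup{(i)}. Likewise, the stable-lattice Fourier condition on $\Lambda^*\setminus\{0\}=\Lambda\setminus\{0\}$ is exactly condition~\textup{(ii)}. So any even Schwartz $h$ satisfying the stable-lattice hypotheses satisfies~\textup{(i)} and~\textup{(ii)} of Theorem~\ref{thm:saturation}.

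The first step is to apply Theorem~\ref{thm:saturation} verbatim. A sharp $h$ under the strengthened hypotheses satisfies the original ones, so the integral-shell saturation~\eqref{eq:saturation-conclusion} follows for $n\ge 4$. The proof of Theorem~\ref{thm:saturation} only ever used the lattices $U\Z^n$, which are unimodular integral, hence stable and self-dual. Nothing about non-self-dual stable lattices needs to be checked.

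The second step is to apply Corollary~\ref{cor:strict}. For $n\ge 8$, the strengthened hypotheses again imply~\textup{(i)} and~\textup{(ii)}, and~\eqref{eq:strict} follows. The only lattices used there are $\Z^n$ and $E_8\oplus\Z^{n-8}$, both unimodular integral and therefore stable. Corollary~\ref{cor:nogo} then follows immediately.

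The only point needing care is the interpretation of the bound itself. For a general stable lattice, Poisson summation reads $\sum_{\Lambda}h=\sum_{\Lambda^*}\wh h$ with $\covol(\Lambda)=1$, so the same certificate value $1+\wh h(0)-h(0)$ is what is being compared against $\Theta_{\Z^n}(t)$. The sharpness normalization~\eqref{eq:sharp-normalization} is therefore unchanged. There is no real obstacle: the result follows because strengthening the hypotheses only shrinks the class of admissible $h$, and every conclusion of the earlier results is a universal statement over that class.
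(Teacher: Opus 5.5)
Your proposal is correct and matches the paper's proof. Both arguments observe that unimodular integral lattices (in particular every $U\Z^n$ and $E_8\oplus\Z^{n-8}$) are stable and self-dual, so the stable-lattice hypotheses imply conditions \textup{(i)} and \textup{(ii)} wherever those conditions are used, and Theorem~\ref{thm:saturation} and Corollaries~\ref{cor:strict} and~\ref{cor:nogo} then apply verbatim. The only difference is cosmetic: you derive \textup{(i)} and \textup{(ii)} in full and cite the earlier results as black boxes, while the paper checks only the specific lattices used in those proofs.
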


\begin{proof}
Every rotation $U\Z^n$ is integral and unimodular, since orthogonal maps
preserve inner products and covolume; hence we have $(U\Z^n)^*=U\Z^n$.
Likewise, $E_8\oplus\Z^{n-8}$ is integral and unimodular, and therefore
self-dual. Thus the strengthened stable-lattice hypotheses imply the hypotheses
used in the proofs of Theorem~\ref{thm:saturation} and
Corollary~\ref{cor:strict} at precisely the lattices needed.
\end{proof}

Conceptually, what the saturation argument requires is $O(n)$-invariance of the
constraint class together with the presence of $E_8\oplus\Z^{n-8}$ within it;
integrality is sufficient but not necessary. Thus, the obstruction we have
identified covers the natural scalar Poisson certificate strategy for the
broader stable-lattice formulation of \eqref{eq:RSD} as well.

\subsection{Orbit-constant graded families}\label{sec:graded}

A natural way to attempt to bypass Theorem~\ref{thm:saturation} is to allow the
auxiliary function to depend on the target lattice itself. Concretely, instead
of a single Schwartz function $h$, one specifies a family $\{h_\Lambda\}_\Lambda$
indexed by stable lattices, with $h_\Lambda$ witnessing the Poisson-summation
bound on $\Theta_\Lambda(t)$ specifically. Such a family must depend on
$\Lambda$ through some notion of lattice structure. One natural condition is
\emph{orbit-constancy}:
\[
  h_{U\Lambda} = h_\Lambda\quad\text{as functions on $\R^n$, for every $U\in O(n)$ and every stable $\Lambda$.}
\]
Orbit-constancy is intentionally stronger than the more usual \emph{equivariance}
condition $h_{U\Lambda}(x)=h_\Lambda(U^{-1}x)$; it captures certificate rules
that assign a single ambient auxiliary function to every isometric copy of a
lattice, for example purely radial rules whose parameters are determined by
orientation-independent scalar invariants of $\Lambda$. The saturation
mechanism extends to the orbit-constancy setting.

\begin{theorem}[Strictness for orbit-constant graded certificates]\label{thm:graded-strict}
Let $n\ge 8$ and $t>0$. Suppose that for every stable lattice
$\Lambda\subset\R^n$ we have an even Schwartz function $h_\Lambda:\R^n\to\R$
satisfying:
\begin{enumerate}
\item[\textup{(a)}] $h_{U\Lambda}=h_\Lambda$ for every $U\in O(n)$ and every
stable $\Lambda$;
\item[\textup{(b)}] $h_\Lambda(x)\ge e^{-t\norm{x}^2}$ for every nonzero
$x\in\Lambda$ and every stable $\Lambda$;
\item[\textup{(c)}] $\wh h_\Lambda(\xi)\le 0$ for every nonzero
$\xi\in\Lambda^*$ and every stable $\Lambda$.
\end{enumerate}
If $1+\wh h_{\Z^n}(0)-h_{\Z^n}(0)=\Theta_{\Z^n}(t)$, then, for
$\Lambda_c=E_8\oplus\Z^{n-8}$,
\[
  1+\wh h_{\Lambda_c}(0)-h_{\Lambda_c}(0)>\Theta_{\Z^n}(t).
\]
\end{theorem}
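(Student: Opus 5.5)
The plan is to run the saturation argument of Theorem~\ref{thm:saturation} on the single function $g:=h_{\Lambda_c}$, using orbit-constancy to move it across shells. By (a), $h_{U\Lambda_c}=g$ for every $U\in O(n)$. Hypotheses (b) and (c), applied to the stable lattices $U\Lambda_c$ (which are self-dual), then say
\[
g(x)\ge e^{-t\norm{x}^2},\qquad \wh g(x)\le 0
\qquad\Bigl(x\in \textstyle\bigcup_{U}U\Lambda_c\setminus\{0\}\Bigr).
\]
This union is exactly the set of nonzero $x$ whose squared norm $\norm{x}^2$ is realized by some vector of $\Lambda_c$. For $n\ge 9$ every positive integer is realized. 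Even integers come from $D_8\subset E_8$; for instance $(1,1,0,\dots)$ has squared norm $2$, and sums of orthogonal such roots give $2m$. Odd integers come from adding a unit vector of $\Z^{n-8}$ to a vector of $E_8$ of squared norm $m-1$. Hence $g$ satisfies the sign conditions on every nonzero point of integer squared norm, and so on every rotation $U\Z^n$.

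With that in hand, I would first rerun the upper half of the chain~\eqref{eq:saturation-chain} for $g$ with $U=I$. This gives $1+\wh g(0)-g(0)\ge\Theta_{\Z^n}(t)$. I would then suppose equality. In that case $g$ is sharp at $\Z^n$, and the proof of Theorem~\ref{thm:saturation} goes through verbatim, since it only uses the hypotheses on the rotations $U\Z^n$. So $g$ interpolates the Gaussian and $\wh g$ vanishes on all integer shells. Finally I would apply the collapse~\eqref{eq:E8sum-collapse} to $\Lambda_c$ with $g$ in place of $h$. This yields $\Theta_{\Lambda_c}(t)=\Theta_{\Z^n}(t)$, which contradicts Lemma~\ref{lem:e8-gap} after dividing by $\Theta_{\Z^{n-8}}(t)>0$. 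The hypothesis that $h_{\Z^n}$ is sharp plays no role in this route for $n\ge 9$. It is consistent with the conclusion, but it is not used.

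The main obstacle is $n=8$, where $\Lambda_c=E_8$ is even. The orbit argument then controls $g$ only on shells of even squared norm, and these do not contain the odd shells of $\Z^8$. The plan there would be to look for an auxiliary self-dual lattice that is linked to $E_8$ through orbit-constancy, or for an inequality that compares $\Theta_{\Z^8}$ with sums over the even shells alone. One example is the decomposition $\Theta_{\Z^8}=\Theta_{D_8}+\Theta_{D_8+(1,0,\dots,0)}$, combined with $\sqrt2\,\Z^8\subset D_8$.

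I am genuinely unsure whether the $n=8$ case follows from (a)--(c) alone. If no such link is available, I would state the result for $n\ge 9$ and treat $n=8$ separately.
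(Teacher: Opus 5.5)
Your argument for $n\ge 9$ is correct, and it differs from the paper's. Once $n\ge 9$, every positive integer is a squared norm in $E_8\oplus\Z^{n-8}$. Orbit-constancy then makes $h_{\Lambda_c}$ itself satisfy conditions \textup{(i)} and \textup{(ii)} of Theorem~\ref{thm:saturation} on every integral shell, so Corollary~\ref{cor:strict} applies to $h_{\Lambda_c}$ directly and the sharpness of $h_{\Z^n}$ is never used. Your justification for the even shells is loose, since $D_8$ has at most eight mutually orthogonal roots. The claim still holds: an integer vector of even squared norm has even coordinate sum, so Lagrange's theorem already produces every even squared norm inside $D_8$.

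The genuine gap is $n=8$, which the statement includes and your proposal leaves open. There $\Lambda_c=E_8$ has only even shells, so \textup{(a)}--\textup{(c)} constrain $h_{E_8}$ only on even shells, and the odd shells of $\Z^8$ are out of reach. Falling back to $n\ge 9$ would not prove the theorem as stated.

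The missing idea is to apply orbit-constancy to $\Z^n$ rather than to $\Lambda_c$. This is exactly where the sharpness hypothesis you set aside does its work.
\begin{itemize}
\item By \textup{(a)}, \textup{(b)}, \textup{(c)} on the self-dual lattices $U\Z^n$, the single function $h_{\Z^n}$ satisfies the sign conditions on every rotation $U\Z^n$.
\item By Lagrange's theorem, which needs only $n\ge 4$, these rotations cover every nonzero point of integer squared norm.
\item Sharpness turns the chain~\eqref{eq:saturation-chain} into equalities. Hence $h_{\Z^n}(x)=e^{-t\norm{x}^2}$ and $\wh h_{\Z^n}(x)=0$ for all such $x$, in particular on $E_8\setminus\{0\}$.
\end{itemize}
The paper then sets $F=h_{\Lambda_c}-h_{\Z^n}$ and notes $F\ge 0$ and $\wh F\le 0$ on $\Lambda_c\setminus\{0\}$. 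Poisson summation over $\Lambda_c=\Lambda_c^*$ gives $1+\wh h_{\Lambda_c}(0)-h_{\Lambda_c}(0)\ge\Theta_{\Z^n}(t)$. Equality would force $\Theta_{\Lambda_c}(t)=\Theta_{\Z^n}(t)$, contrary to Lemma~\ref{lem:e8-gap}.

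Even less suffices. Once $h_{\Z^n}$ is saturated on all integral shells, Poisson summation of $h_{\Z^n}$ itself over $\Lambda_c$ gives $\Theta_{\Lambda_c}(t)-1=\wh h_{\Z^n}(0)-h_{\Z^n}(0)=\Theta_{\Z^n}(t)-1$. This is just Corollary~\ref{cor:strict} applied to $h_{\Z^n}$. So under \textup{(a)}--\textup{(c)} the sharpness hypothesis can never hold, and the theorem holds, vacuously, for every $n\ge 8$. No auxiliary lattice or $D_8$ decomposition is needed.
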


In particular, no orbit-constant graded scalar Poisson family can both be sharp
at $\Z^n$ and establish the sharp bound uniformly over all stable $\Lambda$,
since such a uniform bound would require
\[
  1+\wh h_\Lambda(0)-h_\Lambda(0)\le \Theta_{\Z^n}(t)
\]
for every stable $\Lambda$, including $\Lambda_c=E_8\oplus\Z^{n-8}$.

Orbit-constancy is stronger than asking that $h_\Lambda$ depend only on the
isomorphism class of $\Lambda$---it asks moreover that no information about the
embedding be used in placing the function on $\R^n$. Theorem~\ref{thm:graded-strict}
rules out, in particular, orbit-constant theta-graded families (where the same
ambient function is assigned whenever $\Theta_\Lambda$, equivalently the
shell-count sequence, is the same); orbit-constant modular-form-graded families
(where $\Lambda$ is even unimodular and $h_\Lambda$ depends only on the
coordinates of $\Theta_\Lambda$ in a fixed basis of
$M_{n/2}(\mathrm{SL}_2(\Z))$); and orbit-constant finite-parameter intrinsic
families (where $h_\Lambda$ depends on finitely many $O(n)$-invariants of
$\Lambda$). Equivariant families satisfying only
$h_{U\Lambda}(x)=h_\Lambda(U^{-1}x)$ are \emph{not} ruled out by
Theorem~\ref{thm:graded-strict}; we return to this point in Section~\ref{sec:scope}.

\begin{proof}[Proof of Theorem~\ref{thm:graded-strict}]
Suppose for the sake of seeking a contradiction that
\[
  1+\wh h_{\Lambda_c}(0)-h_{\Lambda_c}(0)\le \Theta_{\Z^n}(t).
\]

\smallskip
\emph{Step 1: $h_{\Z^n}$ equals the Gaussian on every integer-squared-radius
shell.} Fix $U\in O(n)$. The lattice $U\Z^n$ is stable and self-dual, satisfies
$\Theta_{U\Z^n}(t)=\Theta_{\Z^n}(t)$, and by~\textup{(a)} has
$h_{U\Z^n}=h_{\Z^n}$. Since $U\Z^n = (U\Z^n)^*$, condition~\textup{(c)}
applies to the same nonzero points $z \in U\Z^n$ as condition~\textup{(b)}.
Applying the saturation chain~\eqref{eq:saturation-chain} with $h$ replaced by
$h_{\Z^n}$ on $U\Z^n$, using~\textup{(b)} and~\textup{(c)} on $U\Z^n$, and
the sharpness assumption at $\Z^n$, every inequality in the chain is an
equality. By the absolute-convergence argument used in the proof of
Theorem~\ref{thm:saturation},
\[
  h_{\Z^n}(z)=e^{-t\norm{z}^2},\qquad \wh h_{\Z^n}(z)=0
  \qquad(z\in U\Z^n\setminus\{0\}).
\]
By Lagrange's four-square theorem, every nonzero $x\in\R^n$ with
$\norm{x}^2\in\Z$ has the form $Uz$ for some $z\in\Z^n$ and $U\in O(n)$, so
\begin{equation}\label{eq:graded-shell-saturation}
  h_{\Z^n}(x)=e^{-t\norm{x}^2},\qquad \wh h_{\Z^n}(x)=0
  \qquad(x\ne 0,\ \norm{x}^2\in\Z).
\end{equation}

\smallskip
\emph{Step 2: a Poisson inequality between $h_{\Z^n}$ and a competitor.}
Set $\Lambda_c=E_8\oplus\Z^{n-8}$---this lattice is stable, integral, and
unimodular; hence $\Lambda_c=\Lambda_c^*$, and every nonzero $x\in\Lambda_c$
has $\norm{x}^2\in\Z_{>0}$. Define $F=h_{\Lambda_c}-h_{\Z^n}$. Then $F$ is
even Schwartz, and by~\textup{(b)},~\textup{(c)}, the self-duality
$\Lambda_c=\Lambda_c^*$, and~\eqref{eq:graded-shell-saturation},
\begin{equation}\label{eq:F-signs}
  F(x)\ge 0,\qquad \wh F(x)\le 0
  \qquad(x\in\Lambda_c\setminus\{0\}).
\end{equation}
Poisson summation applied to $F$ on $\Lambda_c=\Lambda_c^*$ gives
\[
  F(0)+\sum_{x\in\Lambda_c\setminus\{0\}}F(x)
  =\wh F(0)+\sum_{x\in\Lambda_c\setminus\{0\}}\wh F(x),
\]
which rearranges to
\begin{equation}\label{eq:F-comparison}
  \wh F(0)-F(0)
  =\sum_{x\in\Lambda_c\setminus\{0\}}F(x)
   -\sum_{x\in\Lambda_c\setminus\{0\}}\wh F(x).
\end{equation}
By~\eqref{eq:F-signs}, every term on the right-hand side of
\eqref{eq:F-comparison} is nonnegative, so $\wh F(0)-F(0)\ge 0$. Unpacking, we
obtain
\[
  1+\wh h_{\Lambda_c}(0)-h_{\Lambda_c}(0)
  \ge 1+\wh h_{\Z^n}(0)-h_{\Z^n}(0)
  =\Theta_{\Z^n}(t).
\]

\smallskip
\emph{Step 3: equality forces a theta contradiction.} By assumption
$1+\wh h_{\Lambda_c}(0)-h_{\Lambda_c}(0)\le\Theta_{\Z^n}(t)$, while Step~2
gives the reverse inequality; hence we must have $\wh F(0)-F(0)=0$.
Thus~\eqref{eq:F-comparison} gives
\[
  \sum_{x\in\Lambda_c\setminus\{0\}}F(x)
  +\sum_{x\in\Lambda_c\setminus\{0\}}\bigl(-\wh F(x)\bigr)
  =0.
\]
Both sums are absolutely convergent sums of nonnegative terms, so it must be
that every term vanishes:
\[
  h_{\Lambda_c}(x)=e^{-t\norm{x}^2},\qquad \wh h_{\Lambda_c}(x)=0
  \qquad(x\in\Lambda_c\setminus\{0\}).
\]
Poisson summation applied to $h_{\Lambda_c}$ on $\Lambda_c=\Lambda_c^*$ then
gives
\begin{align*}
  \Theta_{\Lambda_c}(t)-1
   &=\sum_{x\in\Lambda_c\setminus\{0\}}e^{-t\norm{x}^2} \\
   &=\sum_{x\in\Lambda_c\setminus\{0\}}h_{\Lambda_c}(x) \\
   &=\wh h_{\Lambda_c}(0)-h_{\Lambda_c}(0)
    +\sum_{x\in\Lambda_c\setminus\{0\}}\wh h_{\Lambda_c}(x)\\
   &=\Theta_{\Z^n}(t)-1.
\end{align*}
Hence $\Theta_{\Lambda_c}(t)=\Theta_{\Z^n}(t)$, and by
\eqref{eq:multiplicative-theta}, $\Theta_{E_8}(t)=\Theta_{\Z^8}(t)$,
contradicting Lemma~\ref{lem:e8-gap}.
\end{proof}

\subsection{Near-sharp sequences}\label{sec:near-sharp}

Our saturation argument is also stable under approximation: any near-sharp
certificate must asymptotically interpolate the Gaussian and annihilate its
Fourier transform on every positive integral shell.

\begin{proposition}[Approximate saturation]\label{prop:approx-saturation}
Let $n\ge 4$ and $t>0$. Let $\{h_j\}\subset\calS(\R^n)$ be a sequence of even
Schwartz functions, each satisfying conditions \textup{(i)} and \textup{(ii)}
of Theorem~\ref{thm:saturation}. Set
\[
  \varepsilon_j=\varepsilon_j(h_j):=1+\wh h_j(0)-h_j(0)-\Theta_{\Z^n}(t)\ge 0
\]
and suppose that $\varepsilon_j\to 0$. Then for every nonzero $x\in\R^n$ with
$\norm{x}^2\in\Z$,
\[
  0\le h_j(x)-e^{-t\norm{x}^2}\le \varepsilon_j
  \quad\text{and}\quad
  0\le -\wh h_j(x)\le \varepsilon_j.
\]
In particular,
\[
  h_j(x)\to e^{-t\norm{x}^2}
  \quad\text{and}\quad
  \wh h_j(x)\to 0
\]
on every nonzero positive-integer-norm shell.
\end{proposition}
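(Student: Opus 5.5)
The plan is to rerun the saturation chain~\eqref{eq:saturation-chain} for each $h_j$ on an arbitrary rotation $U\Z^n$, but now keep track of the slack in each of the two inequalities rather than forcing both to be equalities. Fix $j$ and $U\in O(n)$. By Poisson summation~\eqref{eq:PS-unimodular} on the self-dual lattice $U\Z^n$ and~\eqref{eq:sametheta}, we get the exact identity
\[
  \sum_{z\in U\Z^n\setminus\{0\}}\bigl(h_j(z)-e^{-t\norm{z}^2}\bigr)
  +\sum_{z\in U\Z^n\setminus\{0\}}\bigl(-\wh h_j(z)\bigr)
  =\wh h_j(0)-h_j(0)-\bigl(\Theta_{\Z^n}(t)-1\bigr)=\varepsilon_j .
\]
Both series converge absolutely since $h_j$ and $\wh h_j$ are Schwartz and the Gaussian is summable. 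This is precisely the chain~\eqref{eq:saturation-chain} with the final equality replaced by the definition of $\varepsilon_j$. Taking $U=I$, the identity also shows $\varepsilon_j\ge 0$, as asserted in the statement.

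By conditions (i) and (ii) applied to the unimodular integral lattice $U\Z^n$, every summand in both series is nonnegative. So a sum of nonnegative terms equals $\varepsilon_j$. Hence each individual term is at most $\varepsilon_j$, and in fact the term $h_j(z)-e^{-t\norm{z}^2}$ and the term $-\wh h_j(z)$ at the same point $z$ together sum to at most $\varepsilon_j$. This gives
\[
  0\le h_j(z)-e^{-t\norm{z}^2}\le\varepsilon_j,\qquad 0\le-\wh h_j(z)\le\varepsilon_j
  \qquad(z\in U\Z^n\setminus\{0\}).
\]

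To pass to arbitrary shells, let $x\ne0$ with $\norm{x}^2\in\Z$. Exactly as in the proof of Theorem~\ref{thm:saturation}, Lagrange's four-square theorem (this is where $n\ge4$ is used) gives $z\in\Z^n$ with $\norm{z}=\norm{x}$. Choose $U\in O(n)$ with $Uz=x$; then $x\in U\Z^n\setminus\{0\}$ and the bounds above apply at $x$. The convergence statements follow immediately from $\varepsilon_j\to0$. None of these steps is a real obstacle. The only point needing care is that the per-term bound is uniform in $U$, which holds because $\varepsilon_j$ does not depend on $U$, and that the nonnegativity needed to drop the remaining terms comes from (i) and (ii) holding on every rotation $U\Z^n$.
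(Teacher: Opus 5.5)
Your proposal is correct and follows the paper's proof essentially verbatim. Like the paper, you write the slack in the chain on an arbitrary rotation $U\Z^n$ as two sums of nonnegative terms whose total is exactly $\varepsilon_j$, bound each term by $\varepsilon_j$, and then move the bound to any nonzero $x$ with $\norm{x}^2\in\Z$ using Lagrange's four-square theorem and a rotation carrying some $z\in\Z^n$ to $x$.
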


\begin{proof}
Fix $U\in O(n)$ and apply the chain~\eqref{eq:saturation-chain} to $h_j$ on
$U\Z^n$. Writing the slack in the two inequalities of
\eqref{eq:saturation-chain} as
\[
  A_j^U:=\sum_{z\in U\Z^n\setminus\{0\}}\bigl(h_j(z)-e^{-t\norm{z}^2}\bigr)\ge 0,
  \qquad
  B_j^U:=-\sum_{z\in U\Z^n\setminus\{0\}}\wh h_j(z)\ge 0,
\]
the chain becomes
\[
  \Theta_{\Z^n}(t)-1+A_j^U+B_j^U
  =\wh h_j(0)-h_j(0)
  =\Theta_{\Z^n}(t)-1+\varepsilon_j,
\]
so $A_j^U+B_j^U=\varepsilon_j$. Each summand of $A_j^U$ is nonnegative, so
each is at most $\varepsilon_j$; similarly for $B_j^U$. Choosing $U$ to carry a
vector $z\in\Z^n$ of squared norm $\norm{x}^2$ to $x$ yields the stated bounds
at $x$.
\end{proof}

\begin{corollary}[No compact limiting certificates]\label{cor:no-compact-limits}
Let $n\ge 8$, $t>0$, and $\Lambda=E_8\oplus\Z^{n-8}$. Then there is no sequence
$\{h_j\}\subset\calS(\R^n)$ of even Schwartz functions satisfying conditions
\textup{(i)} and \textup{(ii)} of Theorem~\ref{thm:saturation} with
$\varepsilon_j(h_j)\to 0$ such that the restrictions of $h_j$ and $\wh h_j$ to
$\Lambda$ are uniformly absolutely summable, i.e., such that there exist
summable nonnegative families $(A_x)_{x\in\Lambda}$ and $(B_x)_{x\in\Lambda}$
for which
\[
  \abs{h_j(x)}\le A_x,
  \qquad
  \abs{\wh h_j(x)}\le B_x
\]
for all $x\in\Lambda$ and all $j$.
\end{corollary}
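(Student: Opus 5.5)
We argue by contradiction, passing to the limit along $\Lambda=E_8\oplus\Z^{n-8}$ with dominated convergence. Suppose such a sequence $\{h_j\}$ exists, with dominating families $(A_x)$ and $(B_x)$. Since $\Lambda$ is unimodular and integral, every nonzero $x\in\Lambda$ has $\norm{x}^2\in\Z_{>0}$. Hence Proposition~\ref{prop:approx-saturation} applies at each such $x$:
\[
  h_j(x)\to e^{-t\norm{x}^2},\qquad \wh h_j(x)\to 0\qquad(x\in\Lambda\setminus\{0\}).
\]

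Next, apply Poisson summation~\eqref{eq:PS-unimodular} to $h_j$ on $\Lambda=\Lambda^*$ and rearrange as in~\eqref{eq:E8sum-collapse}:
\[
  \sum_{x\in\Lambda\setminus\{0\}} h_j(x)-\sum_{x\in\Lambda\setminus\{0\}}\wh h_j(x)
  =\wh h_j(0)-h_j(0)=\Theta_{\Z^n}(t)-1+\varepsilon_j .
\]
The right-hand side tends to $\Theta_{\Z^n}(t)-1$. On the left, the summands are dominated by the summable families $(A_x)$ and $(B_x)$ uniformly in $j$, so dominated convergence for sums over the countable set $\Lambda\setminus\{0\}$ lets us pass to the limit termwise. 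The first sum tends to $\sum_{x\ne0}e^{-t\norm{x}^2}=\Theta_\Lambda(t)-1$, and the second tends to $0$. Therefore $\Theta_\Lambda(t)=\Theta_{\Z^n}(t)$.

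By multiplicativity~\eqref{eq:multiplicative-theta} and $\Theta_{\Z^{n-8}}(t)>0$, this gives $\Theta_{E_8}(t)=\Theta_{\Z^8}(t)$, contradicting Lemma~\ref{lem:e8-gap}.

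The main obstacle is justifying the exchange of limit and sum, and that is exactly what the uniform summability hypothesis provides. Without it, mass could escape to infinity along $\Lambda$: the terms $h_j(x)-e^{-t\norm{x}^2}$ or $\wh h_j(x)$ could concentrate on ever larger shells. The pointwise bounds from Proposition~\ref{prop:approx-saturation} alone do not prevent this, because they hold only shell by shell and say nothing about the tail of the sum. Everything else in the argument is routine.
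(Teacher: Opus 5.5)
Your proposal is correct and follows essentially the same route as the paper: you get pointwise convergence on $\Lambda\setminus\{0\}$ from Proposition~\ref{prop:approx-saturation}, apply Poisson summation on the self-dual lattice $\Lambda$, use the uniform summability hypothesis for dominated convergence, and reach a contradiction with Lemma~\ref{lem:e8-gap}. Your extra reduction to $\Theta_{E_8}(t)=\Theta_{\Z^8}(t)$ via multiplicativity is harmless but unnecessary, since the lemma already states the strict inequality for $E_8\oplus\Z^{n-8}$ directly.
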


\begin{proof}
By Proposition~\ref{prop:approx-saturation}, for each nonzero $x\in\Lambda$,
\[
  h_j(x)\to e^{-t\norm{x}^2}
  \quad\text{and}\quad
  \wh h_j(x)\to 0.
\]
By dominated convergence under the uniform summability hypothesis,
\[
  \sum_{x\in\Lambda\setminus\{0\}}h_j(x)\to\Theta_\Lambda(t)-1
  \quad\text{and}\quad
  \sum_{x\in\Lambda\setminus\{0\}}\wh h_j(x)\to 0.
\]
Poisson summation on $\Lambda=\Lambda^*$ gives
\[
  \sum_{x\in\Lambda\setminus\{0\}}h_j(x)
  =\wh h_j(0)-h_j(0)
  +\sum_{x\in\Lambda\setminus\{0\}}\wh h_j(x).
\]
Letting $j\to\infty$ and using
$\wh h_j(0)-h_j(0)\to\Theta_{\Z^n}(t)-1$, we obtain
$\Theta_\Lambda(t)-1=\Theta_{\Z^n}(t)-1$, contradicting Lemma~\ref{lem:e8-gap}.
\end{proof}

Corollary~\ref{cor:no-compact-limits} should be interpreted in part as a
limitation of the present obstruction. Theorem~\ref{thm:saturation} rules out
attained certificates, and Corollary~\ref{cor:no-compact-limits} rules out
limiting schemes with enough compactness or uniform summability to pass Poisson
summation to the limit on $E_8\oplus\Z^{n-8}$. It does not by itself exclude a
genuinely noncompact limiting procedure or a distributional certificate class in
which pointwise lattice values are no longer available without additional
regularity assumptions.

\section{Remarks on Scope}\label{sec:scope}

\subsection*{Scope of the obstruction}
The obstruction begins at $n=8$, the first dimension in which the unimodular
integral comparison is nontrivial. Indeed, the classical classification of
positive definite unimodular integral lattices shows that $\Z^n$ is the unique
such lattice up to isometry for $n\le 7$, while in dimension $8$ one has both
the odd lattice $\Z^8$ and the even lattice $E_8$ \cite[Chs.~4,~16]{ConwaySloane}.

The obstruction is also sharp in its target. Corollaries~\ref{cor:strict} and
\ref{cor:nogo} rule out certificates that prove the exact $\Z^n$ constant by
the scalar--Poisson-summation argument; Theorem~\ref{thm:graded-strict} rules
out the same for orbit-constant graded families; and
Corollary~\ref{cor:no-compact-limits} rules out compact or dominated limiting
schemes. We do not rule out weaker numerical upper bounds, methods using
additional structure of a given lattice, or genuinely noncompact limiting
procedures whose limit object is not a Schwartz certificate of the required
form.

\subsection*{Why the Gaussian itself is not a certificate}
The function being bounded is the Gaussian, but the Gaussian is not a
Cohn--Elkies-style certificate. If $g_t(x)=e^{-t\norm{x}^2}$, then $g_t$
satisfies the majorization condition \textup{(i)} with equality. However,
\[
  \wh g_t(\xi)=\left(\frac{\pi}{t}\right)^{n/2}
  e^{-\pi^2\norm{\xi}^2/t}>0
  \qquad(\xi\in\R^n),
\]
so $g_t$ fails the required nonpositivity condition \textup{(ii)} at every
nonzero frequency. Our results rule out a sharp Poisson certificate for the
Gaussian mass; they do not suggest that the Gaussian is the wrong test function
in the Regev--Stephens-Davidowitz conjecture.

\subsection*{Strategies beyond the obstruction}
Our obstruction concerns even Schwartz certificates that are either
lattice-independent or, in the graded version, assigned orbit-constantly to the
target lattice. Several natural approaches remain outside that method locus.

First, theta series of unimodular lattices are modular forms, and a sizable
literature exploits the structure of modular form spaces. The works of
Belfiore--Oggier, Belfiore--Sol\'e, Oggier--Sol\'e--Belfiore,
Ernvall-Hyt\"onen, Lin--Oggier, and Bollauf--Lin study secrecy functions,
theta-series ratios, flatness factors, and related modular-form
parametrizations at fixed argument, rather than a single Poisson majorant
\cite{BelfioreOggier,BelfioreSole,OggierSoleBelfiore,ErnvallHytonen,LinOggier,BollaufLin,BollaufLinYtrehus}.
When such parametrizations are orbit-constant (depending only on
$\Theta_\Lambda$ and placed canonically), they are covered by
Theorem~\ref{thm:graded-strict}; the methods themselves, applied at fixed
argument rather than to derive a global pointwise bound, are not.

Second, a natural successor framework is to seek \emph{higher-order or
non-scalar} relaxations. Our results rule out a single scalar auxiliary
function per $O(n)$-orbit. They do not address methods depending on
configurations of several lattice vectors, such as their Gram matrices.
Integral lattices satisfy $\ip{x}{y}\in\Z$, a constraint on pairs of vectors
that is invisible to a single-point scalar certificate. The Bachoc--Vallentin
semidefinite programming framework \cite{BachocVallentin} and its
harmonic-analytic refinements retain such angular information: variables are
indexed by triples $(\norm{y_1}^2,\norm{y_2}^2,\ip{y_1}{y_2})$ of squared
lengths and inner products, and positive semidefiniteness constraints arise
from Gegenbauer/Schoenberg expansions of spherical-harmonic kernels. In the
integral-lattice setting, the inner product is constrained to $\Z$, restricting
the SDP support to a sparse discrete set of cosines that scalar certificates
structurally cannot access.

Third, as noted in Section~\ref{sec:graded}, \emph{equivariant non-constant graded
families} satisfying $h_{U\Lambda}(x)=h_\Lambda(U^{-1}x)$, rather than the stronger
orbit-constancy hypothesis $h_{U\Lambda}=h_\Lambda$, are outside the scope of
Theorem~\ref{thm:graded-strict}. In such a family $h_\Lambda$ genuinely depends
on the embedding of $\Lambda$, and Step~1 of the proof of
Theorem~\ref{thm:graded-strict} propagates saturation of $h_{\Z^n}$ only to
$\Z^n\setminus\{0\}$, rather than to the full integer-norm locus. Such families
amount to solving a separate scalar Poisson linear program for each embedded
lattice, so the present saturation argument does not distinguish them from a
direct attack on the conjectural comparison itself.

Fourth, low-dimensional or otherwise restricted cases may be accessible by
classification. In dimensions where unimodular lattices are classified, the
conjectural inequality can be reduced to finitely many theta-series
comparisons. This route is orthogonal to the universal auxiliary-function
strategy ruled out here.

\subsection*{Relation to the Belfiore--Sol\'e conjecture}
In the secrecy-gain literature, with the convention
$\Theta_\Lambda(z)=\sum_{x\in\Lambda}e^{\pi i z\norm{x}^2}$ for
$\operatorname{Im}z>0$, the secrecy function of a unimodular lattice is
$\Xi_\Lambda(y)=\Theta_{\Z^n}(iy)/\Theta_\Lambda(iy)$ for $y>0$. Under the
substitution $t=\pi y$, this connects to the present Gaussian-mass convention as
$\Theta_\Lambda(iy)=\Theta_\Lambda(t)$ with $t=\pi y$. The Belfiore--Sol\'e
conjecture asserts that $\Xi_\Lambda$ attains its global maximum at the symmetry
point $y=1$ for unimodular lattices \cite{BelfioreSole}. This is a single-point
assertion about the location of an extremum; the Regev--Stephens-Davidowitz
conjecture \eqref{eq:RSD} is instead the global pointwise inequality
$\Theta_\Lambda(t)\le\Theta_{\Z^n}(t)$ for all $t>0$. Our results apply to
attempts to prove the global pointwise inequality through a scalar Poisson
certificate; they do not obstruct the single-point or symmetry-point methods
used in the secrecy-gain setting.

\subsection*{Nonunimodular lattices}
Our results do not claim to cover the full nonunimodular integral case. If
$\Lambda$ is integral but not unimodular, then $\Lambda\subsetneq\Lambda^*$ and
$\covol(\Lambda)>1$, and Poisson summation reads as
\[
  \sum_{x\in\Lambda}h(x)=\frac{1}{\covol(\Lambda)}
  \sum_{\xi\in\Lambda^*}\wh h(\xi);
\]
thus the primal and dual point sets no longer coincide, and the factor
$1/\covol(\Lambda)$ changes the equality bookkeeping. Our saturation argument
depends essentially on the self-dual normalization $\Lambda=\Lambda^*$, in
which the same nonzero lattice points carry the majorization and Fourier-sign
conditions. Whether a comparable obstruction can be formulated for a natural
nonunimodular certificate class is left open.

\section{Conclusion}
The obstruction proven here is deliberately narrow but rigid. It does not
settle the underlying Gaussian mass comparison, nor does it rule out all
linear-programming or Cohn--Elkies-type methods. Rather, it identifies a
specific failure mode for the most direct scalar Poisson-summation certificate,
in the orbit-constant formulations considered: a single universal function,
the corresponding stable-lattice certificate, orbit-constant graded families,
and compact near-sharp limits. In dimensions $n\ge 8$, the saturation forced by
sharpness at $\Z^n$ is incompatible with the strict structural theta-series gap
between $\Z^8$ and $E_8$. If a Poisson-summation proof of the sharp Gaussian
mass bound exists, it must use either a genuinely equivariant non-constant
construction, a higher-order or multi-point framework, or a formulation beyond
the single-Schwartz-function scalar framework considered here.

\newcommand{\etalchar}[1]{$^{#1}$}
\providecommand{\bysame}{\leavevmode\hbox to3em{\hrulefill}\thinspace}
\providecommand{\MR}{\relax\ifhmode\unskip\space\fi MR }
% \MRhref is called by the amsart/book/proc definition of \MR.
\providecommand{\MRhref}[2]{%
  \href{http://www.ams.org/mathscinet-getitem?mr=#1}{#2}
}

\end{document}